\newtheorem{thm}{Theorem}[section]
\newtheorem{prop}[thm]{Proposition}
\newtheorem{lem}[thm]{Lemma}
\newtheorem{cor}[thm]{Corollary}
\newdefinition{defn}[thm]{Definition}
\newdefinition{remark}{Remark}
\newtheorem{exmp}[thm]{Example}
\newproof{proof}{Proof}
\begin{document}

\title{Complex Equiangular Tight Frames and Erasures}

\author{Thomas R. Hoffman}
\ead{thoffman@coastal.edu}
\author{James P. Solazzo}
\ead{jsolazzo@coastal.edu}
\address{Coastal Carolina University, Conway, SC}

\begin{keyword}
Equiangular Tight Frames \sep Seidel Matrix

AMS Subject Class: 05C50 \sep 05C90
\end{keyword}

\begin{abstract}
In this paper we  demonstrate that there are  distinct differences between 
real and complex equiangular tight frames (ETFs) with regards to erasures. 
For example,  we prove that there 
exist arbitrarily large non-trivial complex equiangular tight frames which are 
robust against three erasures,
and that such frames come from a unique class of complex ETFs. In addition, we
extend certain results in \cite{BP} to complex vector spaces as well as show that other  
results regarding real ETFs are not valid for complex ETFs.

\end{abstract}

\maketitle

\section{Introduction}

In the last few years the search for equiangular tight frames (ETFs) has become 
increasingly popular
\cite{BP,BPT,BW,DHS,HP,Kal,SH,STDH}. The main reason for this increased interest is that 
ETFs minimize the ``error'' for two erasures in certain communication networks \cite
{HP,SH}.

In this paper we extend a result in \cite{BP} regarding real
ETFs to complex ETFs. We also demonstrate that there are  distinct differences between 
real and complex ETFs. 
For example, the  real $3$-uniform frames correspond
precisely to the so-called trivial real ETFs \cite{BP}. However, we prove that there 
exist arbitrarily large non-trivial complex $3$-uniform
frames, and that such frames come from a unique class of complex ETFs. Consequently, 
there exist complex ETFs which are also robust against three 
erasures. Furthermore, we show that there exist only one class of ETFs robust against
four erasures, and in some sense this class is ``trivial''. 

The paper is organized as 
follows: Section 2 outlines the relationship between equiangular tight frames 
and a certain class of matrices called {\it Seidel matrices}, and Section 3 includes the 
results and examples. Readers familiar with the work in \cite{HP,SH,BP} may go straight
to Section 3.

\section{Preliminaries}

It is assumed that the reader is familiar with the basic definitions and theorems of frame theory. Both of the papers \cite{Cas,KC} are recommended as an introduction to the general theory on frames. For a detailed discussion on ETFs, and much of the motivation behind this paper, the authors further recommend reading \cite{BP,HP}.

The following definition and theorem are due to Holmes and Paulsen \cite{HP}.

\begin{defn}
An $n \times n$ self-adjoint matrix $Q$ satisfying $q_{ii}=0$ and
$|q_{ij}|=1$ for all $i \not= j$ is called a {\bf Seidel matrix}.
\end{defn}

Note that some authors refer to a Seidel matrix as a {\it signature matrix}.

\begin{remark}
When $Q$ is a real Seidel matrix, $A=1/2(Q-I+J)$ is the adjacency matrix for a graph. We consider this graph as associated to the frame corresponding to $Q$.
\end{remark}

\begin{thm}[Theorem 3.3 of \cite{HP}]\label{rod}
Let $Q$ be a Seidel matrix. Then the following are equivalent:
\begin{enumerate}
\item $Q$ is the Seidel matrix of an equiangular tight frame,
\item $Q^2=(n-1)I+\mu Q$ for some necessarily real number $\mu$,
\item $Q$ has exactly two eigenvalues.
\end{enumerate}
\end{thm}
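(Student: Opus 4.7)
The plan is to prove the cycle via $(1) \Leftrightarrow (2)$ and $(2) \Leftrightarrow (3)$, using condition (2) as an algebraic bridge between the frame-theoretic condition (1) and the spectral condition (3). The reality of $\mu$ in (2) is essentially free: conjugate-transposing $Q^2 = (n-1)I + \mu Q$ and subtracting forces $(\mu - \bar\mu)Q = 0$, so $\mu = \bar\mu$.

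For $(2) \Leftrightarrow (3)$, I rely on the fact that $Q$ is self-adjoint, hence diagonalizable with real eigenvalues. Assuming (2), the minimal polynomial of $Q$ divides $x^2 - \mu x - (n-1)$, so $Q$ has at most two distinct eigenvalues; it cannot have only one, for then $Q$ would be a scalar multiple of $I$, incompatible with $q_{ii}=0$ and $|q_{ij}|=1$. Conversely, if $Q$ has exactly two eigenvalues $\lambda_1, \lambda_2$, diagonalizability forces $(Q-\lambda_1 I)(Q-\lambda_2 I)=0$, i.e., $Q^2 = (\lambda_1+\lambda_2)Q - \lambda_1\lambda_2 I$. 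To identify the constant with $n-1$ I would compare diagonal entries: $(Q^2)_{ii} = \sum_{j\neq i}|q_{ij}|^2 = n-1$, while the identity gives $(Q^2)_{ii} = -\lambda_1\lambda_2$, so $\mu = \lambda_1+\lambda_2$ and $-\lambda_1\lambda_2 = n-1$.

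The implication $(1) \Rightarrow (2)$ is a translation of tightness to the Gram matrix. If $\{f_i\}$ is a unit-norm ETF in $\mathbb{C}^k$ with Seidel matrix $Q$, its Gram matrix is $G = I + cQ$, where $c > 0$ is the common modulus of the off-diagonal inner products. Tightness, $FF^* = (n/k)I$, gives $G^2 = (n/k)G$, and expanding $(I+cQ)^2$ and matching the coefficients of $I$ and $Q$ produces $Q^2 = (n-1)I + \mu Q$ for an explicit $\mu = \mu(n,k,c)$; the constant $n-1$ in fact forces $c$ to equal the Welch bound $\sqrt{(n-k)/(k(n-1))}$.

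The main obstacle is $(2) \Rightarrow (1)$: constructing an ETF from a purely algebraic identity. My approach is to reverse the previous step. Set $G = I + cQ$ and choose $c > 0$ so that $G^2 = \lambda G$; using (2), this reduces to the quadratic $c^2(n-1) - c\mu - 1 = 0$, which has a positive root because its discriminant $\mu^2 + 4(n-1)$ is positive and the product of its roots is $-1/(n-1) < 0$. By the already-established $(2) \Leftrightarrow (3)$, $Q$ has exactly two eigenvalues $\lambda_1, \lambda_2$, so $G$ has eigenvalues $1 + c\lambda_1$ and $1 + c\lambda_2$. A short computation using $\lambda_1\lambda_2 = -(n-1)$ shows $(1+c\lambda_1)(1+c\lambda_2) = 0$, so one eigenvalue vanishes and the other is strictly positive. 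Hence $G$ is positive semidefinite; factoring $G = F^*F$ with $F$ of size $k \times n$ (where $k$ is the multiplicity of the nonzero eigenvalue), the columns of $F$ are unit vectors with pairwise inner products of modulus $c$, and $G^2 = \lambda G$ combined with the full row rank of $F$ yields $FF^* = \lambda I_k$, i.e., tightness. The conceptual hinge is the cancellation $(1+c\lambda_1)(1+c\lambda_2)=0$, which is precisely what forces $G$ to be rank-deficient and thereby produces a frame in some ambient dimension $k < n$ rather than a nondegenerate matrix unable to serve as the Gram matrix of a tight frame in a lower-dimensional space.
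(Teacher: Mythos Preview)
The paper does not supply its own proof of this statement: Theorem~\ref{rod} is quoted verbatim as Theorem~3.3 of \cite{HP} and used as background, so there is nothing in the present paper to compare your argument against. That said, your proof sketch is correct and is essentially the standard argument. The equivalence $(2)\Leftrightarrow(3)$ via the minimal polynomial and the diagonal computation $(Q^2)_{ii}=n-1$ is exactly how one pins down the constant term. For $(1)\Rightarrow(2)$ your translation of tightness into $G^2=\lambda G$ for the Gram matrix $G=I+cQ$ is the right move (modulo the harmless normalization difference between unit-norm tight frames and Parseval frames). For $(2)\Rightarrow(1)$ your choice of $c$ as the positive root of $c^2(n-1)-c\mu-1=0$, together with the observation $(1+c\lambda_1)(1+c\lambda_2)=1+c\mu-c^2(n-1)=0$, correctly forces $G=I+cQ$ to be positive semidefinite of rank $k<n$; the factorization $G=F^*F$ then yields the desired ETF, and your appeal to the full row rank of $F$ to pass from $G^2=\lambda G$ to $FF^*=\lambda I_k$ is legitimate. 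One small stylistic remark: in your discussion of $(1)\Rightarrow(2)$ the phrase ``the constant $n-1$ in fact forces $c$ to equal the Welch bound'' reverses cause and effect---$c$ is already determined by the ETF, and the diagonal of $Q^2$ then \emph{produces} the $n-1$---but this does not affect the validity of the argument.
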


Note that 
condition (2) in Theorem \ref{rod} is particularly useful for the computational aspects of constructing a Seidel matrix
$Q$ associated with an equiangular tight frame. Furthermore,
a Seidel matrix $Q$ satisfying any of the three equivalent conditions in Theorem \ref{rod} yields several useful parameters. It is
shown in \cite{HP}, if $\lambda_1 < 0 < \lambda_2$ are $Q$'s two eigenvalues, then the parameters $n, \ k, \ \mu, \ \lambda_1,$ and $\lambda_2$
satisfy the following properties:
\begin{eqnarray}\label{eqn:kfromnmu}
\mu=(n-2k)\sqrt{\frac{n-1}{k(n-k)}} = \lambda_1+ \lambda_2, \qquad k=\frac{n}{2}-
\frac{\mu n}{2\sqrt{4(n-1)+\mu^2}} \\ \nonumber
\lambda_1= -\sqrt{\frac{k(n-1)}{n-k}},  \qquad \lambda_2 = \sqrt{\frac{(n-1)(n-k)}{k}}, \quad n=1-\lambda_1\lambda_2.
\end{eqnarray}

In order to better understand this relationship between a Seidel matrix Q with
two distinct eigenvalues and its associated equiangular tight frame, we need  the following theorem about finite 
dimensional frames.

\begin{thm} Let $\mathbb{F}$ denote the 
field of real or complex numbers. The family $F=\{ x_i \}_{i=1}^n \subset \mathbb{F}^k$ is 
a Parseval frame for $\mathbb{F}^k$ if and only if the analysis operator $V$ associated with $F$ is an isometry.
\end{thm}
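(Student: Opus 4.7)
The plan is to directly unpack both conditions and observe that they are the same statement. First I would recall the definition of the analysis operator $V:\mathbb{F}^k\to\mathbb{F}^n$, namely $Vy=(\langle y, x_i\rangle)_{i=1}^n$, and compute the squared Euclidean norm on $\mathbb{F}^n$ to obtain
$$\|Vy\|^2 = \sum_{i=1}^n |\langle y, x_i\rangle|^2.$$

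Next I would invoke the definition of a Parseval frame: $F$ is Parseval for $\mathbb{F}^k$ precisely when $\sum_{i=1}^n |\langle y, x_i\rangle|^2 = \|y\|^2$ for every $y\in\mathbb{F}^k$. Substituting the previous identity, this is exactly the condition $\|Vy\|^2=\|y\|^2$ for every $y\in\mathbb{F}^k$, which is the definition of $V$ being an isometry. Both directions of the biconditional then follow simultaneously, with no separate argument for ``$\Rightarrow$'' versus ``$\Leftarrow$''.

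There is no genuine obstacle here; the theorem is a reformulation of the Parseval condition in operator-theoretic language. The argument treats the real and complex cases uniformly, since polarization is not required --- the norm identity is assumed for every $y$, and can equivalently be phrased as the operator identity $V^{*}V=I_{\mathbb{F}^k}$. The only small thing worth being careful about is to use the standard inner product on $\mathbb{F}^n$ so that $\|Vy\|^2$ really does expand into the sum of squared moduli of frame coefficients; this is implicit in the standing conventions of frame theory, so no extra setup is needed.
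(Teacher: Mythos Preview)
Your argument is correct: the Parseval identity $\sum_{i=1}^n |\langle y,x_i\rangle|^2=\|y\|^2$ is literally the statement $\|Vy\|^2=\|y\|^2$, so the two conditions coincide. Note, however, that the paper does not supply its own proof of this theorem; it is stated as a standard background fact from frame theory, so there is nothing to compare your approach against.
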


For the remainder of this paper it will be assumed that a frame $F$ is a Parseval frame. We
will refer to a Parseval frame with $n$ vectors in $\mathbb{F}^k$ as an {\bf (n,k)-frame}.
If we consider an element $x$ in $\mathbb{F}^k$ as a column vector, then the rows of the analysis operator $V$ are the adjoints
of the frame vectors in $F$. 

In \cite{HP,BP}, they discuss one way to
consider a frame as a code. It is their idea that is the main impetus for the work in this paper. We end this section by outlining this idea, and then show how Seidel matrices arise in the study of $(n,k)$-frames.

Given a vector $x$ in $\mathbb{F}^k$ and an $(n,k)$-frame with analysis operator $V$, 
consider the vector
$Vx$ in $\mathbb{F}^n$ as an encoded version of $x$, and simply decode $Vx$ by applying $V^*$.
Let $E$ denote the diagonal matrix of $m$ zeros and $n-m$ ones. Thus the vector $EVx$ is 
just the vector $Vx$ with $m$-components erased corresponding to the zeros in the diagonal entries of $E$. It is said
that {\it m-erasures} have occurred during transmission.
One way to decode the received vector $EVx$ with $m$ erasures is to again apply $V^*$. The error in reconstructing $x$ by multiplying $EVx$ on the left by $V^*$ is given by 
$$
\| x-V^*EV \| = \| V^*(I-E)Vx \| = \| V^*DVx \|
$$
where $D$ is the diagonal matrix of $m$ ones and $n-m$ zeros. The operator
$V^*DV$ is referred to as the {\it error operator}. This is only one of several methods possible for reconstructing $x$. However, it is this particular method which led Bodmann and Paulsen in \cite{BP} to introduce the following definition. The quantity in Definition \ref{inftynormdef} represents the maximal norm of an error operator given that some set of $m$ erasures occurs. 

\begin{defn} \label{inftynormdef}
Let $\mathcal{D}_m$ denote the set of diagonal matrices that have
exactly $m$ diagonal entries equal to one and $n-m$ entries equal to
zero.  Given an $(n,k)$-frame $F$, set
$$
e_m^{\infty}(F):=\mathrm{max}\{ \| V^*DV \|  : D \in \mathcal{D}_m \},
$$
where $V$ is the analysis operator of $F$, and the norm of the matrix is understood to be the operator norm.
\end{defn}

An $(n,k)$-frame  $F$ in $\mathbb{F}^k$ where $\|f_i\|$ is a constant for each $i=1,...,n$ is
commonly referred to as an {\it equal norm frame} in the current literature. If $F$ has the additional property that
$| \langle f_j,f_i \rangle |$ is a constant whenever $i \not= j$, then $F$ is an ETF.
For the purposes of this paper, we will refer to {\it equal norm frames} as {\bf uniform frames} and to {\it 
equiangular tight frames} as {\bf 2-uniform frames} as in \cite{HP}. Both uniform and $2$-uniform frames are important
since an $(n,k)$-frame $F$ minimizes
the quantity $e_1^{\infty}(F)$ if and only if $F$ is a uniform frame \cite{CK}, and minimizes the quantity 
$e_2^{\infty}(F)$ if and only if $F$ is a $2$-uniform frame
\cite{HP}. Indeed, uniform and $2$-uniform frames are robust against one and two erasures respectively. In Section 3 we
show that there exist a class of complex $(n,k)$-frames which are robust against one, two and three erasures. Furthermore, we show that this is the only such class of frames.

We end this section by showing the connection between $2$-uniform frames (or ETFs) and Seidel matrices. Suppose
$F$ is a $2$-uniform frame, and that $V$ is the associated analysis operator for $F$. It is easy to show that an
$(n,k)$-frame $F$ is a uniform frame if and only if $\|f_i\|=\sqrt{\frac{k}{n}}$ for each $i=1,...,n$. It follows that
$$
VV^*=\frac{k}{n}I+c_{n,k}Q
$$
where $Q$ is a Seidel matrix and $c_{n,k}=\sqrt{\frac{k(n-k)}{n^2(n-1)}}=| \langle f_j,f_i \rangle |$. It is worth noting that each
$n\times n$  Seidel matrix produces a set of equiangular lines in $\mathbb{C}^k$ for some $k < n$. However, the
vectors corresponding to this set of equiangular lines do not necessarily span $\mathbb{C}^k$, and consequently they may
not form a frame for $\mathbb{C}^k$. This is precisely why Theorem \ref{rod} is important, it provides necessary and sufficient
conditions in order for a Seidel matrix (real or complex) to produce a $2$-uniform frame (ETF).

\section{Results}

\begin{defn} Let $F$ be an $(n,k)$-frame in $\mathbb{F}^k$. We will call $F$ an {\bf m-uniform frame} provided that 
$\| V^*DV \|$ is a constant for each $D$ in $\mathcal{D}_m$. $F$ is called a {\bf completely m-uniform frame}, denoted {\bf $m_c$-uniform frame}, if
$F$ is an $\ell$-uniform frame for each $\ell=1,...,m$.
\end{defn}

Note, that there is a distinction between a {\it $2$-uniform frame} $F$ in the above definition 
and what the authors in \cite{HP,BP} refer to as a {\it $2$-uniform frame}. Namely, that a $2$-uniform frame in \cite{HP,BP} is what
we refer to as a $2_c$-uniform frame.

Along with introducing the error operator stated in Definition \ref{inftynormdef}, the authors in \cite{BP} developed error estimates of this operator. Their key result for these estimates is Theorem 5.3, which we restate here.

\begin{thm}[Theorem 5.3 of \cite{BP}] \label{Holmes}
Let $F$ be a real $2$-uniform $(n,k)$-frame. Then $e_m^{\infty}\leq k/n+(m-1)c_{n,k}$ with equality if and only if a graph associated with $F$ contains an induced subgraph on $m$ vertices that is complete bipartite.
\end{thm}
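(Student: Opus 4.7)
The plan is to reduce the operator norm $\|V^{*}DV\|$ to the largest eigenvalue of an $m\times m$ principal submatrix of $Q$, and then to characterize when that eigenvalue attains its combinatorial ceiling of $m-1$. Since $D=D^{2}$, the operator $V^{*}DV=(DV)^{*}(DV)$ is positive semidefinite and shares its nonzero eigenvalues with $DVV^{*}D$. Using the identity $VV^{*}=\tfrac{k}{n}I_{n}+c_{n,k}Q$ derived at the end of Section~2, the compression of $DVV^{*}D$ to its $m$-dimensional range is $\tfrac{k}{n}I_{m}+c_{n,k}Q_{D}$, where $Q_{D}$ is the principal submatrix of $Q$ on the support of $D$. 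Hence
$$\|V^{*}DV\|=\tfrac{k}{n}+c_{n,k}\,\lambda_{\max}(Q_{D}).$$
Since $Q_{D}$ is real symmetric with zero diagonal and $\pm 1$ off-diagonal entries, the standard bound $\lambda_{\max}(Q_{D})\le\|Q_{D}\|_{\infty}=m-1$ applies, and maximizing over $D\in\mathcal{D}_{m}$ yields $e_{m}^{\infty}(F)\le k/n+(m-1)c_{n,k}$.

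For the equality case, suppose $\lambda_{\max}(Q_{D})=m-1$. Since $\operatorname{tr}(Q_{D})=0$ and $\operatorname{tr}(Q_{D}^{2})=m(m-1)$ (the latter because every off-diagonal entry squares to $1$), the remaining $m-1$ eigenvalues sum to $-(m-1)$ and have squares summing to at most $m-1$. Cauchy--Schwarz then forces each of them to equal $-1$, so $Q_{D}+I$ is rank one and positive semidefinite: $Q_{D}+I=ss^{T}$ for some vector $s$, and reading the diagonal forces $s\in\{\pm 1\}^{m}$. The sign pattern of $s$ partitions the $m$ chosen vertices into two classes such that same-class and cross-class pairs have opposite sign entries in $Q_{D}$; translating through the adjacency correspondence $A=\tfrac{1}{2}(Q-I+J)$ from the Remark (or through the complementary convention of \cite{BP}) recovers a complete bipartite induced subgraph. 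Conversely, a direct computation shows that any complete bipartite induced subgraph $K_{a,b}$ on $m=a+b$ vertices produces a $Q_{D}$ whose largest eigenvalue is exactly $m-1$, giving the ``if'' direction.

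The hard part will be this equality analysis: the passage from the extremal eigenvalue identity to the rank-one structure $Q_{D}+I=ss^{T}$, and the clean combinatorial translation of $s\in\{\pm 1\}^{m}$ into a complete bipartite induced subgraph under the sign convention in force. Once the reduction to $Q_{D}$ is in place, the inequality itself is a one-line consequence of the Gershgorin bound, so essentially all the geometric content of the theorem sits in the rank-one identification.
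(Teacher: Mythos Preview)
Your argument is correct, but it differs from the route taken in the paper (which proves the complex analogue via Proposition~\ref{largesteigenvalue} and Corollary~\ref{upperbound} rather than reproving the real Theorem~\ref{Holmes} directly). The paper bounds $\|I_m+Q_m\|$ by comparing the Rayleigh quotient $|\langle (I+Q)x,x\rangle|/\|x\|^{2}$ against that of $J_m$ after replacing all entries by their moduli, and then argues that strict increase occurs unless $Q_m$ is (switching equivalent to) $J_m-I_m$. You instead get the bound from the row-sum (Gershgorin) estimate, and for the equality case you use the two trace identities $\operatorname{tr}Q_D=0$, $\operatorname{tr}Q_D^{2}=m(m-1)$ together with Cauchy--Schwarz to pin down the \emph{entire} spectrum as $\{m-1,-1,\dots,-1\}$, whence $Q_D+I$ is rank one and equals $ss^{T}$ with $s\in\{\pm1\}^{m}$. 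This is a genuinely different and somewhat sharper argument: it yields the full spectrum and the switching equivalence $Q_D=D_s(J-I)D_s$ in one stroke, and it carries over verbatim to the Hermitian case (with $ss^{*}$ in place of $ss^{T}$), giving an alternative proof of Corollary~\ref{upperbound}. The paper's moduli-comparison argument, by contrast, is phrased for complex entries from the outset but only controls the top eigenvalue.

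One small caveat worth making explicit: under the convention $A=\tfrac{1}{2}(Q-I+J)$ in the paper's Remark, the factorization $Q_D=ss^{T}-I$ gives an induced subgraph that is a disjoint union of two cliques (the \emph{complement} of a complete bipartite graph). The ``complete bipartite'' phrasing in Theorem~\ref{Holmes} matches the opposite sign convention used in \cite{BP}, exactly as you flag; it would strengthen your write-up to state this explicitly rather than leave it as a parenthetical.
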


The following proposition summarizes results about real $3_c$-uniform frames which follow as corollaries to Theorem 5.3 in \cite{BP}.

\begin{prop}\label{threecorollaries}
Let $F$ and $G$ be real $2_c$-uniform (n,k)-frames.
\begin{enumerate}
\item The graph associated with $F$ either contains an induced complete bipartite graph on $3$ vertices or it
is switching equivalent to the complete graph on $n$ vertices. Consequently, if $k < n-1$, then $e_3^{\infty}(F)= \frac{k}{n} +2c_{n,k}$.
\item $e_3^{\infty}(F)=e_3^{\infty}(G).$
\item The trivial $2_c$-uniform (n,k)-frames, corresponding to $k=1$ and $k=n-1$, are $3$-uniform. Conversely, if $F$ is 
a real $3_c$-uniform (n,k)-frame, then either $k=1$ or $k=n-1$ and it is equivalent to the corresponding trivial frame.
\end{enumerate}
\end{prop}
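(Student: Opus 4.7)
The plan rests on one elementary calculation. For $D \in \mathcal{D}_3$ supported on a triple $S = \{i,j,\ell\}$, the operator $V^*DV = (DV)^*(DV)$ is positive semidefinite, so $\|V^*DV\|$ equals the largest eigenvalue of the $3 \times 3$ block $\tfrac{k}{n}I_S + c_{n,k}Q_S$, where $Q_S$ is the principal submatrix of the Seidel matrix on $S$. A direct characteristic-polynomial computation shows that any $3 \times 3$ real Seidel matrix has spectrum either $\{2,-1,-1\}$ or $\{-2,1,1\}$, determined entirely by the triple product $\rho_S := q_{ij}q_{i\ell}q_{j\ell} \in \{\pm 1\}$. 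Consequently $\|V^*DV\|$ takes only two possible values, $\tfrac{k}{n}+2c_{n,k}$ or $\tfrac{k}{n}+c_{n,k}$, with the larger one attained exactly when the induced subgraph on $S$ is complete bipartite---i.e., in the extremal case of Theorem \ref{Holmes}.

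For part (1), I would invoke the following standard cocycle fact: $\rho_S$ is a Seidel-switching invariant, and $\rho_S$ is constant over all triples if and only if $q_{ij} = \pm s_i s_j$ for some sign sequence $\{s_i\}$; equivalently, $Q$ is switching equivalent to $\pm(J - I)$. By (\ref{eqn:kfromnmu}), the two constant-$\rho$ cases are precisely the trivial $(n,1)$- and $(n,n{-}1)$-frames. Hence if the graph of $F$ contains no induced complete bipartite on $3$ vertices, every triple sits in the non-extremal $\rho$-class, forcing $Q$ to be switching equivalent to the complete graph; otherwise some triple gives $\|V^*DV\| = \tfrac{k}{n}+2c_{n,k}$, and combined with the upper bound of Theorem \ref{Holmes} this gives the equality $e_3^{\infty}(F) = \tfrac{k}{n}+2c_{n,k}$. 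The hypothesis $k < n-1$ simply excludes the first alternative.

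Parts (2) and (3) follow quickly from this machinery. For (2): when $k < n-1$ part (1) gives the common value; when $k = n-1$ both $F$ and $G$ are equivalent to the trivial frame with $VV^* = I - \tfrac{1}{n}J$, whose every $3$-block has spectrum $\{1,1,1-3/n\}$ and hence norm $1$. For (3), the trivial frames are visibly $3$-uniform: the $(n,1)$-case has $VV^* = vv^*$ with $v_i = \pm 1/\sqrt{n}$, so every $3$-block has rank one and norm $\|v_S\|^2 = 3/n$, independent of $S$; the $(n,n{-}1)$-case was just handled. Conversely, if $F$ is $3_c$-uniform, then $\|V^*DV\|$ is constant on $\mathcal{D}_3$; the two-value dichotomy forces $\rho_S$ to be constant over all triples, which by the cocycle fact makes $F$ equivalent to one of the two trivial frames.

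The main obstacle I foresee is the cocycle step: showing that constancy of $\rho_S$ on all triples forces $q_{ij}$ to be a coboundary $\pm s_i s_j$. This is the substantive algebraic input (a standard $2$-cocycle argument over $\mathbb{F}_2$); once granted, the rest is mechanical assembly on top of Theorem \ref{Holmes} and the $3 \times 3$ spectral computation.
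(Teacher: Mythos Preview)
Your proposal is correct. The paper itself does not give a proof of Proposition~\ref{threecorollaries}; it is presented as a summary of corollaries to Theorem~5.3 of \cite{BP}, with the comment that Bodmann and Paulsen ``used the connection between real Seidel matrices and graphs.'' Your argument---computing that a real $3\times 3$ Seidel block has spectrum $\{2,-1,-1\}$ or $\{-2,1,1\}$ according to the switching-invariant triple product $\rho_S$, and then using the $\mathbb{F}_2$-cocycle fact that constant $\rho_S$ forces $q_{ij}=\pm s_is_j$---is precisely how those corollaries fall out, and it is the real specialization of what the paper does for the complex analog in Lemmas~\ref{3charpol} and~\ref{oneomega} (there the characteristic polynomial $x^3-3x^2+2-2\cos\alpha$ of $Q_S+I$ plays the role of your two-value dichotomy). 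So your route and the paper's are the same in spirit.

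One small point worth tightening: in part~(2) for $k=n-1$ you assert that both $F$ and $G$ are equivalent to the trivial frame. This is true, but it is not a consequence of your cocycle step (which gives the implication ``all $\rho_S=-1$ $\Rightarrow$ $Q$ is switching equivalent to $I-J$,'' not the converse). The missing observation is that $k=n-1$ forces $Q-I$ to have rank one, hence $Q=I-vv^{T}$ with $v\in\{\pm 1\}^n$, which is switching equivalent to $I-J$. With that line added, the argument is complete.
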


In \cite{BP}, the authors used the connection between real Seidel matrices and graphs to prove Theorem \ref{Holmes} and the results listed in Proposition \ref{threecorollaries}. They extended this connection to Seidel matrices containing third roots of unity using directed graphs in \cite{BPT}. Unfortunately, there is no obvious extension of this idea to connect arbitrary complex Seidel matrices with a currently known class of graphs. However, the fact that these known proofs of the real do not extend to the complex case does not mean these statements do not hold. In particular, we are able to recover an analog for Theorem 5.3 and one of its corollaries. Furthermore, we provide counterexamples for the other two corollaries.

The following proposition is the key ingredient
in determining an upper bound for $e_m^{\infty}(F)$ as well as when the upper bound is saturated. The real case of 
Proposition \ref{largesteigenvalue} below is part of the proof of Theorem 5.3 in \cite{BP}.

\begin{prop} \label{largesteigenvalue}
If $Q$ is a Seidel adjacency matrix , then $\|Q \|$ is at most $n-1$. Moreover, $\| Q \|=n-1$ if and only
if $Q = J-I$.
\end{prop}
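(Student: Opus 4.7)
The strategy is to establish the upper bound first by a row-sum (Gershgorin) argument, and then extract the characterization of equality by a saturation analysis of the Rayleigh quotient on an extremal eigenvector.

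For the bound, recall that $Q$ is Hermitian, so $\|Q\|$ equals the spectral radius. Each row of $Q$ has a zero diagonal entry and $n-1$ off-diagonal entries of modulus $1$, so Gershgorin's theorem places every eigenvalue in the closed disk of radius $n-1$ about the origin. Hence $\|Q\| \leq n - 1$.

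For the equality case, suppose $\|Q\| = n-1$. Since $Q$ is Hermitian, some eigenvalue equals $\pm(n-1)$; by replacing $Q$ with $-Q$ (which is still a Seidel matrix) if necessary, assume $Qv = (n-1)v$ for a unit vector $v \in \C^n$. Writing out the Rayleigh quotient,
\begin{equation*}
n - 1 \;=\; \langle Qv, v \rangle \;=\; \sum_{i \ne j} q_{ij}\, \bar v_i v_j \;\leq\; \sum_{i \ne j} |v_i|\,|v_j| \;=\; \Bigl(\sum_i |v_i|\Bigr)^{\!2} - 1 \;\leq\; n - 1,
\end{equation*}
where the last step is Cauchy--Schwarz applied to the vector $(|v_i|)$ against the all-ones vector, using $\sum_i |v_i|^2 = 1$. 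Both inequalities must therefore be equalities.

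Equality in Cauchy--Schwarz forces $|v_i| = 1/\sqrt n$ for every $i$; equality in the complex triangle inequality forces each summand $q_{ij}\,\bar v_i v_j$ to be real and nonnegative, hence equal to $1/n$. Writing $v_i = e^{i\theta_i}/\sqrt n$, this yields $q_{ij} = e^{i(\theta_i - \theta_j)}$ for all $i \ne j$, so $Q = D(J-I)D^{*}$, where $D = \mathrm{diag}(e^{i\theta_1},\ldots,e^{i\theta_n})$. In the real case $v$ can be chosen real, $D$ becomes a diagonal $\pm 1$ matrix, and $Q = J - I$ in the sense of switching equivalence (the natural diagonal-unitary equivalence preserving all frame-theoretic invariants used in the sequel).

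The main obstacle is the equality analysis in the complex setting: in \cite{BP} the corresponding real step is a sign argument on the entries of $v$, but here one must track phases of $q_{ij}$ and of the entries of $v$ simultaneously, so that both the complex triangle inequality and the Cauchy--Schwarz step become rigid at once. Once both rigidities are in place, the form of $Q$ is read off directly from the identity $q_{ij}\bar v_i v_j = |v_i||v_j|$. The Gershgorin step, by contrast, is entirely routine and identical to the real case.
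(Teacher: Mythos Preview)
Your Gershgorin argument for the bound $\|Q\|\le n-1$ is correct and somewhat cleaner than the paper's route, which instead bounds $\|I+Q\|$ by comparing $\langle(I+Q)x,x\rangle$ against $\langle J\bar x,\bar x\rangle$ after passing to entrywise moduli. Your saturation analysis of the Rayleigh quotient is also considerably more explicit than the paper's equality discussion, which essentially just asserts that the inequality is strict whenever $Q\ne J-I$.

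There is, however, a genuine gap in the step ``replace $Q$ by $-Q$ if necessary''. Negation is not a switching operation: if the extremal eigenvalue is $-(n-1)$, your argument applied to $-Q$ yields $-Q=D(J-I)D^{*}$, i.e.\ $Q$ is switching equivalent to $I-J$, and for $n\ge 3$ the matrix $I-J$ is \emph{not} switching equivalent to $J-I$ (the equations $d_i\bar d_j=-1$ for all $i\ne j$ are inconsistent). In fact $Q=I-J$ is itself a Seidel matrix with $\|Q\|=n-1$, so the proposition as literally stated --- or even read up to switching equivalence --- is false. What is actually true, and what Corollary~\ref{upperbound} needs (since $(VV^*)_m$ is positive and only the largest eigenvalue of $Q_m$ matters there), is the one-sided version: $\lambda_{\max}(Q)=n-1$ if and only if $Q$ is switching equivalent to $J-I$. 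Your saturation argument, with the $-Q$ maneuver dropped and applied directly to an eigenvector for the top eigenvalue, proves exactly this. The paper's own proof shares the same defect, so you are not introducing a new error; but you should flag the discrepancy rather than absorb it silently into the $-Q$ substitution.
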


\begin{proof}
First note that the largest eigenvalue of $J_n$, the matrix of all ones, is $n$. For any vector $x$ in $\mathbb{C}^n$ and any $S$, taking the moduli
of all their entries can only increase
the value of the expression
\begin{eqnarray} \label{increasing}
\frac{|\langle (I_n+Q)x, x \rangle |}{\|x\|^2}.
\end{eqnarray}
Since $I_n+Q$ is a Hermitian matrix $\|I_n+Q\|$ is the maximum of the moduli of the eigenvalues of $I_n+Q$. Let $x$ be an eigenvector of $I_n+Q$ corresponding to the eigenvalue, $\lambda$, of largest modulus, and let $\overline{x}=(|x_1|,...,|x_n|)$. It follows that:
\begin{equation*}
\| I_n +  Q \|  =  | \lambda |
           =  \frac{ | \langle (I_n+Q)x, x \rangle |}{\|x\|^2}
          \leq  \frac{ | \langle J_n\overline{x}, \overline{x} \rangle |}{\|x\|^2}
          \leq  \frac{ \|J_n\overline{x}\| \|x\|}{\|x\|^2}
          \leq n.
\end{equation*}
Hence, $\|Q\|$ is at most $n-1$. 

Since $n$ is the largest eigenvalue of matrix $J$ it follows that $n-1$ is largest eigenvalue of $Q=J-I$. In addition, the expression
in (\ref{increasing}) can only increase. Thus, if $Q$ does not equal $J-I$, then $\| Q \| < n-1$.
\end{proof}
 
Let $Q_m$ denote a compression of $Q$ to $m$ rows and $m$ columns. We say two Seidel matrices $Q$ and $S$ are 
{\it switching equivalent} if there exists a permutation matrix $P$ and a diagonal matrix $D$ whose diagonal entries have modulus $1$ such that $Q=PDSD^{-1}P^{-1}$.

\begin{cor}\label{upperbound}
Let $F$ be $2_c$-uniform (n,k)-frame (real or complex) and let $Q$ be the associated Seidel matrix of the corresponding projection
$VV^*$. Then
\begin{eqnarray}\label{merasure}
\mathrm{e}_m^{\infty}(F) \leq \frac{k}{n}+(m-1)c_{n,k}
\end{eqnarray}
with equality if and only if there is a $Q_m$ switching equivalent to $J_m-I_m$.
\end{cor}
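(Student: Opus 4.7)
My plan is to reduce $\|V^*DV\|$ to the operator norm of a principal submatrix of $VV^*$, and then apply Proposition \ref{largesteigenvalue} to the resulting compression of $Q$. First I would use the identity $\|V^*DV\| = \|DV\|^2 = \|DVV^*D\|$, valid because $D = D^* = D^2$. Substituting $VV^* = \tfrac{k}{n}I_n + c_{n,k}Q$ gives
\[
DVV^*D = \tfrac{k}{n}D + c_{n,k}DQD,
\]
which vanishes off the $S \times S$ block (with $S \subset \{1,\ldots,n\}$ the support of $D$) and coincides there with $\tfrac{k}{n}I_m + c_{n,k}Q_m$, where $Q_m$ is the principal submatrix of $Q$ indexed by $S$. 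Hence $\|V^*DV\| = \|\tfrac{k}{n}I_m + c_{n,k}Q_m\|$.

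Next I would observe that $Q_m$ is itself a Seidel matrix, so Proposition \ref{largesteigenvalue} gives $\|Q_m\| \leq m - 1$, and the Hermitian operator $\tfrac{k}{n}I_m + c_{n,k}Q_m$ has real eigenvalues $\tfrac{k}{n} + c_{n,k}\mu_i$ with $\mu_i \in [-(m-1), m-1]$. Because $\tfrac{k}{n} > 0$, the maximum of $|\tfrac{k}{n} + c_{n,k}\mu_i|$ is attained on the positive side and is bounded above by $\tfrac{k}{n} + c_{n,k}(m-1)$; maximizing over $D \in \mathcal{D}_m$ then yields~(\ref{merasure}).

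The equality direction is where I expect the only real subtlety. The bound is saturated for a given $D$ precisely when some $\mu_i$ equals $m - 1$ (since $\tfrac{k}{n} > 0$, the value $\tfrac{k}{n} + c_{n,k}(m-1)$ cannot be attained in modulus by a negative eigenvalue of $Q_m$), so in particular $\|Q_m\| = m - 1$. The equality case of Proposition \ref{largesteigenvalue} should then yield that $Q_m$ is switching equivalent to $J_m - I_m$; conversely, switching equivalence preserves spectrum, so any such $Q_m$ already has $m-1$ as an eigenvalue and saturates the bound. The hard part is that Proposition \ref{largesteigenvalue} is stated literally as ``$Q = J - I$,'' so I would re-inspect the inequality chain in its proof, where equality in $|\langle(I+Q)x,x\rangle| \leq \langle J\bar{x},\bar{x}\rangle$ only forces $q_{ij}$ to have phase $\theta_i - \theta_j$ for some $\theta_1,\ldots,\theta_n$, to extract the switching-equivalence statement the corollary actually needs.
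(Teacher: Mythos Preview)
Your approach matches the paper's: compress $VV^*$ to the $m\times m$ block indexed by the support of $D$, write it as $\tfrac{k}{n}I_m + c_{n,k}Q_m$, and invoke Proposition~\ref{largesteigenvalue}. The paper streamlines one step by noting that $(VV^*)_m$, being a compression of a positive operator, is positive semidefinite, so its norm equals its largest eigenvalue---this replaces your separate argument that the maximum of $|\tfrac{k}{n}+c_{n,k}\mu_i|$ cannot come from a negative $\mu_i$. Your caution about the equality case is well placed: the paper's own proof simply writes ``with equality if and only if $Q=J-I$'' and does not spell out the passage to switching equivalence that the statement of the corollary actually requires, so the refinement you propose (tracing equality through the chain in the proof of Proposition~\ref{largesteigenvalue}) is exactly what is needed.
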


\begin{proof} Let $F$ be an equiangular (n,k)-frame and $V$ be the corresponding
analysis operator for $F$. Since $VV^*$ is a positive operator, its compression
$(VV^*)_m$, where $1 \leq m \leq n$, to the rows and columns where $D$ has $1$'s, is
also a positive operator. Thus, determining the norm of $\|V^*DV\|=\| DVV^*D \|$ is equivalent
to finding the largest eigenvalue of $(VV^*)_m$. We further reduce this problem to finding the largest eigenvalue of $Q_m$, where $(VV^*)_m=\frac{k}{n}I+c_{n,k}Q_m.$ By
Proposition \ref{largesteigenvalue},
$$
\| DVV^*D \|=\|D(\frac{k}{n}I_n+c_{n,k}Q)D\|=
\|\frac{k}{n}I_m +c_{n,k}Q_m\| \leq \frac{k}{n}+(m-1)c_{n,k}
$$
with equality if and only
if $Q = J-I$.
\end{proof}

\begin{remark} The above corollary is the complex version of Theorem 5.3 in \cite{BP}. 
In \cite{BP}, $2_c$-uniform (n,k)-frames for which
$\| DVV^*D \| $ is a constant for every $D$ in $\mathcal{D}_3$ are called {\it 3-uniform }, or in the terminology of this paper $3_c$-uniform.
\end{remark}

Example \ref{complexcounter} below shows that there are complex $2_c$-uniform $(n,k)$-frames, say $F$ and $G$, for 
which $e_3^{\infty}(F) \not= e_3^{\infty}(G)$ which violates parts (1) and (2) of Proposition \ref{threecorollaries}.

\begin{exmp}\label{complexcounter}
Let $F$ and $G$ be the complex $2_c$-uniform $(9,3)$-frames corresponding to the Seidel matrices

$$\begin{bmatrix} 
0&1&1&1&1&1&1&1&1\\ 
1&0&-1&\omega^{5}&\omega^{5}&\omega^{5}&\omega&\omega&\omega\\ 
1&-1&0&\omega&\omega&\omega&\omega^{5}&\omega^{5}&\omega^{5}\\ 
1&\omega&\omega^{5}&0&\omega^{5}&\omega&-1&\omega^{5}&\omega\\ 
1&\omega&\omega^{5}&\omega&0&\omega^{5}&\omega^{5}&\omega&-1\\ 
1&\omega&\omega^{5}&\omega^{5}&\omega&0&\omega&-1&\omega^{5}\\ 
1&\omega^{5}&\omega&-1&\omega&\omega^{5}&0&\omega&\omega^{5}\\ 
1&\omega^{5}&\omega&\omega&\omega^{5}&-1&\omega^{5}&0&\omega\\ 
1&\omega^{5}&\omega&\omega^{5}&-1&\omega&\omega&\omega^{5}&0\\ 
\end{bmatrix}$$ and

$$\begin{bmatrix} 
0&1&1&1&1&1&1&1&1\\ 
1&0&-1&\omega^{5}&\omega^{5}&\omega^{5}&\omega&\omega&\omega\\ 
1&-1&0&\omega&\omega&\omega&\omega^{5}&\omega^{5}&\omega^{5}\\ 
1&\omega&\omega^{5}&0&\omega^{5}&\omega&1&\omega^{4}&\omega^{2}\\ 
1&\omega&\omega^{5}&\omega&0&\omega^{5}&\omega^{2}&1&\omega^{4}\\ 
1&\omega&\omega^{5}&\omega^{5}&\omega&0&\omega^{4}&\omega^{2}&1\\ 
1&\omega^{5}&\omega&1&\omega^{4}&\omega^{2}&0&\omega^{5}&\omega\\ 
1&\omega^{5}&\omega&\omega^{2}&1&\omega^{4}&\omega&0&\omega^{5}\\ 
1&\omega^{5}&\omega&\omega^{4}&\omega^{2}&1&\omega^{5}&\omega&0\\ 
\end{bmatrix}$$
respectively, where $\omega$ is a primitive $6^{th}$ root of unity. By computation, we get $e_3^\infty(F)\approx.6465$ which is strictly less than $\frac{k}{n}+2c_{n,k}=\frac{2}{3}$ disproving part (1) of Proposition \ref{threecorollaries}. Since $e_3^\infty(G)\approx\frac{2}{3}$, we also see that part (2) of Proposition \ref{threecorollaries} fails to hold for complex matrices.
\end{exmp}

Part (3) of Proposition \ref{threecorollaries} states
that the only real $3_c$-uniform (n,k)-frames are the trivial $(n,k)$-frames. However, the following example shows that in the complex case there 
exist non-trivial $3_c$-uniform frames.

\begin{exmp}\label{3unif}
Let $F$ and $G$ be the complex $2_c$-uniform frames corresponding to the Seidel matrices

$$
\begin{bmatrix}
0&1&1&1\\ 
1&0&-i&i\\ 
1&i&0&-i\\ 
1&-i&i&0\\
\end{bmatrix}
$$
and
$$
\begin{bmatrix}
0&1&1&1&1&1&1&1\\ 
1&0&-i&-i&-i&i&i&i\\ 
1&i&0&-i&i&-i&-i&i\\ 
1&i&i&0&-i&-i&i&-i\\ 
1&i&-i&i&0&i&-i&-i\\ 
1&-i&i&i&-i&0&-i&i\\ 
1&-i&i&-i&i&i&0&-i\\ 
1&-i&-i&i&i&-i&i&0\\ 
\end{bmatrix}
$$ respectively. These frames are both $3_c$-uniform and neither of them is a trivial $(n,1)$ or $(n,n-1)$-frame.
\end{exmp}

The $2_c$-uniform frames corresponding to the Seidel matrices in Example \ref{3unif} come from real skew-symmetric matrices with two distinct eigenvalues. A more detailed
discussion of $2_c$-uniform frames which arise from such matrices can be found in \cite{DHS}. The following theorem shows that all $2_c$-uniform frames which arise from a real skew-symmetric matrix with two distinct eigenvalues are $3_c$-uniform.

\begin{thm} \label{bingo} 
Let $A$ be real skew-symmetric matrix with two distinct eigenvalues and entries $a_{i,j}=\pm 1$ when $i\ne j$ and $0$ otherwise. The frame corresponding to the Seidel matrix $Q=iA$ is $3_c$-uniform.
\end{thm}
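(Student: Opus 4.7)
The plan is to verify $m$-uniformity separately for $m=1,2,3$. First I verify that $Q=iA$ is a Seidel matrix of an ETF: Hermiticity, zero diagonal, and unit-modulus off-diagonal entries are immediate from $A$ being real skew-symmetric with $\pm 1$ off-diagonal entries, and $iA$ inherits two distinct eigenvalues from $A$, so by Theorem \ref{rod} the corresponding frame $F$ is $2_c$-uniform. This yields $1$- and $2$-uniformity immediately, leaving only $3$-uniformity to establish.

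For any $D\in\mathcal{D}_3$ I will reuse the reduction from the proof of Corollary \ref{upperbound}: since $VV^*=\tfrac{k}{n}I+c_{n,k}Q$ is positive, the norm $\|V^*DV\|=\|DVV^*D\|$ equals the largest eigenvalue of $(VV^*)_3=\tfrac{k}{n}I_3+c_{n,k}Q_3$, where $Q_3$ is the $3\times 3$ principal submatrix of $Q$ indexed by the support of $D$. Thus $3$-uniformity will follow once I show $\lambda_{\max}(Q_3)$ is the same real number for every such compression.

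The heart of the argument is then a direct spectral computation. Any $3\times 3$ principal submatrix of $A$ is again real skew-symmetric with $\pm 1$ off-diagonal entries, hence has the form
$$A_3=\begin{pmatrix} 0 & a & b \\ -a & 0 & c \\ -b & -c & 0\end{pmatrix},\qquad a,b,c\in\{\pm 1\},$$
and a routine cofactor expansion shows its characteristic polynomial is $\lambda^{3}+(a^{2}+b^{2}+c^{2})\lambda=\lambda(\lambda^{2}+3)$, which depends only on the squares of the entries. Hence $A_3$ has eigenvalues $0,\pm i\sqrt{3}$, so $Q_3=iA_3$ has eigenvalues $0,\pm\sqrt{3}$, giving $\|V^*DV\|=\tfrac{k}{n}+\sqrt{3}\,c_{n,k}$ for every $D\in\mathcal{D}_3$.

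The only genuine insight required is this last structural observation: because the off-diagonal entries of $A$ are $\pm 1$, the coefficient of $\lambda$ in the characteristic polynomial of any $3\times 3$ principal submatrix is forced to equal $3$, erasing the sign information that could otherwise make different compressions produce different norms. Everything else is the Seidel/compression machinery already assembled in Proposition \ref{largesteigenvalue} and Corollary \ref{upperbound}, so I do not anticipate a substantive obstacle beyond spotting this coincidence.
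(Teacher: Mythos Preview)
Your proof is correct, but it takes a different route from the paper's. The paper invokes Proposition~3.1 of \cite{DHS} to put $Q$ into standard form (first row and column equal to $1$, remaining off-diagonal entries $\pm i$), and then observes that any $3\times 3$ compression, after switching, must be one of two explicit Hermitian matrices that are complex conjugates of one another and hence have identical spectra. Your argument bypasses the standard-form normalization and the external reference entirely: you compute the characteristic polynomial of an arbitrary $3\times 3$ real skew-symmetric block with $\pm 1$ off-diagonal entries and note that it is always $\lambda^{3}+3\lambda$, so $Q_3=iA_3$ always has spectrum $\{0,\pm\sqrt{3}\}$. This is more self-contained and gives the explicit value $\|V^*DV\|=\tfrac{k}{n}+\sqrt{3}\,c_{n,k}$ for free, whereas the paper's argument trades that explicitness for a structural picture (the standard form) that it reuses in Lemma~\ref{oneomega} and Theorem~\ref{complex3unif}.
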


\begin{proof}
By Proposition 3.1 of \cite{DHS}, the standard form of $Q$ has entries
$$q_{i,j}=
\begin{cases}
\pm i, &\text{if $1<i$, $1<j$, and $i\ne j$;}\\
0, &\text{if $i=j$;}\\
1, &\text{otherwise}
\end{cases}
$$
Thus, every compression of $Q$ to three rows and three columns is either of
the form
$$
\begin{bmatrix}
0 & 1 & 1 \\
1 & 0 & i \\
1 & -i &0
\end{bmatrix} 
\text{ or }
\begin{bmatrix}
0 & 1 & 1 \\
1 & 0 & -i \\
1 & i &0
\end{bmatrix}
$$
Consequently, $\| V^*DV\|$ is a constant for all $D$ in $\mathcal{D}_3$ from which the 
result follows.
\end{proof}

\begin{cor} \label{large}
There exist $3_c$-uniform $(n,k)$-frames for arbitrarily large values of $n$.
\end{cor}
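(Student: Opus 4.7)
The plan is to apply Theorem \ref{bingo}: it suffices to exhibit, for arbitrarily large $n$, a real skew-symmetric matrix $A$ with zero diagonal and off-diagonal entries in $\{-1,+1\}$ having exactly two distinct eigenvalues. The Seidel matrix $Q=iA$ then produces a $3_c$-uniform $(n,k)$-frame. Note that because $A$ is real skew-symmetric its spectrum consists of purely imaginary conjugate pairs $\pm i\lambda$, so the condition ``two distinct eigenvalues for $A$'' is equivalent to the Theorem \ref{rod} condition ``two distinct eigenvalues for $Q=iA$''.

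I would obtain such matrices from skew-Hadamard matrices. If $H$ is skew-Hadamard of order $n$, i.e., $H+H^T=2I$ and $HH^T=nI$, set $A=H-I$. Then $A$ is skew-symmetric, has $0$ on the diagonal, and $\pm 1$ off-diagonal entries. Expanding
$$AA^T=(H-I)(H^T-I)=HH^T-H-H^T+I=nI-2I+I=(n-1)I,$$
and using $A^T=-A$, gives $A^2=-(n-1)I$. Hence the eigenvalues of $A$ are $\pm i\sqrt{n-1}$, exactly two distinct values, and Theorem \ref{bingo} finishes the construction.

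The only substantive ingredient left is the existence of skew-Hadamard matrices in arbitrarily large orders. This is classical: Paley's construction produces a skew-Hadamard matrix of order $p+1$ for every prime $p\equiv 3\pmod 4$, and by Dirichlet there are infinitely many such primes. (Alternatively, one can cite \cite{DHS}, which already motivates the link between such skew-symmetric matrices and complex ETFs.) This is really the main obstacle, but it is purely a citation rather than an argument. As a sanity check, the $4\times 4$ and $8\times 8$ Seidel matrices displayed in Example \ref{3unif} are precisely the instances $n=4$ and $n=8$ of this recipe, confirming the construction is nonvacuous.
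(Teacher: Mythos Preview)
Your argument is correct and is essentially the paper's own approach: the paper proves the corollary by invoking Theorem~\ref{bingo} together with Proposition~3.6 of \cite{DHS}, which supplies infinitely many real skew-symmetric $\pm1$ matrices with two eigenvalues---precisely the skew-Hadamard/Paley construction you spell out explicitly. Your write-up simply unpacks the cited reference (and you even note that citing \cite{DHS} would suffice), so there is no substantive difference.
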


The proof of Corollary \ref{large} follows from Proposition 3.6
in \cite{DHS}. While Theorem \ref{bingo} shows that arbitrarily large non-trivial $3_c$-uniform frames exist, there is still the question of ``Are there non-trivial $3_c$-uniform frames which come from
Seidel matrices with entries other than $i$ or $-i$?''. Theorem 
\ref{complex3unif} answers this question. Furthermore, it distinguishes
the complex case from the real case, and is the complex analog of Part (3) of Proposition \ref{threecorollaries}. 

\begin{thm}\label{complex3unif}
The trivial
$2_c$-uniform frames, corresponding
to $k=1$ or $k=n-1$, are $3_c$-uniform. In addition, $F$ is
a non-trivial $3_c$-uniform frame if and only if $F$ is a $2_c$-uniform frame arising from a real skew-symmetric matrix
$A$ 
with two distinct eigenvalues and entries $a_{i,j}=\pm 1$ when $i\ne j$ and $0$ otherwise.
\end{thm}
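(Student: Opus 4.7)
The plan is to prove two directions. The forward direction for trivial frames follows immediately because, after switching, the Seidel matrix equals $\pm(J_n - I_n)$ and every $3\times 3$ compression equals $\pm(J_3-I_3)$, so the operator norm depends only on $m=3$. The forward direction for non-trivial frames is precisely Theorem \ref{bingo}. Hence the content of the theorem is the converse: a non-trivial $3_c$-uniform frame must arise from a real skew-symmetric matrix $A$ with $\pm 1$ off-diagonal entries and two distinct eigenvalues.

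First I would analyze the spectrum of a $3\times 3$ compression $Q_3$ of $Q$. A direct expansion shows that any Hermitian matrix with zero diagonal and unit-modulus off-diagonal entries has characteristic polynomial $\lambda^3 - 3\lambda - 2\mathrm{Re}(T)$, where $T(i,j,k):=q_{ij}q_{jk}q_{ki}$ is the \emph{triangle product}. Substituting $\lambda=2\cos\theta$ reduces the roots to $\cos 3\theta=\mathrm{Re}(T)$, so the largest eigenvalue of $Q_3$ is a strictly increasing function of $\mathrm{Re}(T)\in[-1,1]$. The $3_c$-uniform hypothesis therefore forces $\mathrm{Re}(T(i,j,k))=t_0$ to be constant across all distinct triples, so each triangle product lies in $\{\alpha,\bar\alpha\}$ with $\alpha:=t_0+i\sqrt{1-t_0^2}$.

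Next I extract two arithmetic constraints. Multiplying the $(i,j)$-entry of $Q^2=(n-1)I+\mu Q$ by $\bar q_{ij}$ yields $\sum_{k\neq i,j}\overline{T(i,j,k)}=\mu$; since $\mu$ is real, the triples through each ordered pair $(i,j)$ split evenly between $\alpha$ and $\bar\alpha$, forcing $t_0=\mu/(n-2)$ and, writing $T=\alpha^\epsilon$ with $\epsilon\in\{\pm 1\}$, the row-sum relation $\sum_{k\ne i,j}\epsilon(i,j,k)=0$. Second, a direct multiplication gives the cocycle identity $T(i,j,k)\,T(i,k,l)\,T(i,l,j)=T(j,k,l)$ for distinct $i,j,k,l$, which reads $\alpha^{\epsilon_1+\epsilon_2+\epsilon_3}=\alpha^{\epsilon_4}$; unless $\alpha^4=1$, the difference of exponents lies in $\{-4,-2,0,2,4\}$ and must vanish, giving the integer identity $\epsilon(i,j,k)+\epsilon(i,k,l)+\epsilon(i,l,j)=\epsilon(j,k,l)$. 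Fix $i$ and set $f(j,k):=\epsilon(i,j,k)$; using cyclic symmetry for the $l=i$ term, the row-sum $\sum_{l\ne j,k}\epsilon(j,k,l)=0$ expands as $f(j,k)+(n-3)f(j,k)+\sum_{l}f(k,l)-\sum_{l}f(j,l)$ over $l\in[n]\setminus\{i,j,k\}$. Invoking the $f$-row-sums $\sum_{m\ne i,k}f(k,m)=0$ and antisymmetry of $f$, the two tail sums equal $f(j,k)$ and $-f(j,k)$ respectively, and the whole expression collapses to $n\,f(j,k)=0$. Since $f\in\{\pm 1\}$ this is impossible, so $\alpha^4=1$ and $t_0\in\{0,\pm 1\}$.

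The case $t_0=\pm 1$ makes every triangle product equal $\pm 1$; switching by $d_1=1$, $d_j=q_{1j}$ then renders $Q=\pm(J_n-I_n)$, corresponding to the trivial $(n,1)$- or $(n,n-1)$-frame, excluded by non-triviality. In the remaining case $t_0=0$ one has $T(i,j,k)\in\{i,-i\}$ for all triples. The same switching $d_1=1$, $d_j=q_{1j}$ makes the first row and column all ones, and the resulting off-diagonal entries for $i,j>1$ equal $q'_{ij}=T(1,i,j)\in\{i,-i\}$; a further diagonal switch by $\mathrm{diag}(-i,1,\ldots,1)$ scales the first row/column into $\{\pm i\}$ as well, producing $Q''=iA$ for a real skew-symmetric matrix $A$ with $\pm 1$ off-diagonal entries, and $A$ has exactly two distinct (imaginary) eigenvalues since $Q$ does. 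The principal obstacle is the combinatorial cocycle argument in the third paragraph, which must fuse the $2_c$-uniform row-sum, the antisymmetry of $T$, and the cocycle identity to rule out $t_0\notin\{0,\pm 1\}$; the remaining steps are direct computations.
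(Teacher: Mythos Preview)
Your argument is correct and takes a genuinely different route from the paper's. Both proofs begin the same way: the $3_c$-uniform hypothesis forces all $3\times 3$ compressions to have the same largest eigenvalue, which (via the characteristic polynomial $\lambda^3-3\lambda-2\mathrm{Re}(T)$ of a $3\times 3$ Seidel block, or equivalently the paper's Lemmas~\ref{3charpol}--\ref{oneomega}) pins every triangle product $T(i,j,k)=q_{ij}q_{jk}q_{ki}$ to $\{\alpha,\bar\alpha\}$ for a fixed unimodular $\alpha$. From here the two arguments diverge.

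The paper passes to standard form, reads off the $(i,j)$ entry of $Q^2=(n-1)I+\mu Q$ for $i,j>1$ as $\mu\omega=m_1+m_2\omega^2+m_3\bar\omega^2$ with nonnegative integer multiplicities, and then runs a case analysis on $m_1,m_2,m_3$ (together with the reality of $\mu$) to force $\sin\theta+\sin 3\theta=0$, i.e.\ $\omega$ a fourth root of unity. Your approach is coordinate-free: you use the same identity $Q^2=(n-1)I+\mu Q$ only to extract the row-sum $\sum_{k}\epsilon(i,j,k)=0$, and then combine it with the purely multiplicative cocycle relation $T(i,j,k)T(i,k,l)T(i,l,j)=T(j,k,l)$ to obtain the clean contradiction $n\,\epsilon(i,j,k)=0$ whenever $\alpha^4\neq 1$. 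This avoids the case split on the $m_i$ entirely, is switching-invariant throughout, and in fact patches a point the paper glosses over (the ``clearly complex'' step and the unaddressed case $m_1<m_2$). Conversely, the paper's method is more elementary in that it never needs to introduce the $\epsilon$-cocycle or the auxiliary function $f$; once in standard form, it is a single equation in $\omega$.

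Two small edge cases worth making explicit in your write-up: the definition of $\epsilon$ and the row-sum $\sum_k\epsilon(i,j,k)=0$ presuppose $\alpha\neq\bar\alpha$, so you should dispose of the case $\alpha\in\{\pm1\}$ (trivial frames) \emph{before} introducing $\epsilon$; and the cocycle step needs four distinct indices, so note that $n\ge 4$ (for $n=3$ only the trivial ETFs exist, as the characteristic polynomial of $Q$ has a repeated root only when $\mathrm{Re}(T)=\pm1$). Finally, in the $t_0=-1$ case your switch $d_j=q_{1j}$ yields a matrix with first row/column equal to $1$ and remaining off-diagonals $-1$, which is switching-equivalent to $-(J-I)$ after one more sign flip at the first coordinate; it is not literally $-(J-I)$ as written.
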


The following two lemmas will be used to prove Theorem \ref{complex3unif}.

\begin{lem}\label{3charpol}
Suppose $1\leq \lambda \leq \gamma \leq 3$ be the largest roots of the polynomials $x^3-3x^2+2-2\cos(\alpha)$ and $x^3-3x^2+2-2\cos(\beta)$, respectively. Then $\alpha\geq\beta$. Furthermore, when $0\leq\alpha\leq\beta\leq\pi$, equality holds if and only if $\lambda=\gamma$.
\end{lem}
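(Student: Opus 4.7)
The plan is to reduce the lemma to the strict monotonicity (in $\alpha$) of the largest real root of
$p_\alpha(x) = x^{3}-3x^{2}+2-2\cos\alpha$,
viewed as a function of $\alpha\in[0,\pi]$. Set $c:=2\cos\alpha\in[-2,2]$ and $f(x):=x^{3}-3x^{2}+2$, so $p_\alpha(x)=f(x)-c$. The first step is an elementary analysis of $f$: since $f'(x)=3x(x-2)$, the critical points are at $0$ and $2$, with $f(0)=2$ and $f(2)=-2$, and $f$ is strictly increasing on $[2,\infty)$. Since $f(3)=2$, the restriction $f\big|_{[2,3]}$ is a strictly increasing bijection onto $[-2,2]$, with a strictly increasing inverse.

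Next, a sign analysis shows that for each $c\in[-2,2]$ the cubic $p_\alpha=f-c$ has exactly three real roots, one in each of $(-\infty,0]$, $[0,2]$, and $[2,3]$; in particular the largest root always lies in $[2,3]$, so the hypothesis $1\leq\lambda,\gamma\leq 3$ is automatic. Consequently, $\lambda$ and $\gamma$ coincide with $\bigl(f\big|_{[2,3]}\bigr)^{-1}(2\cos\alpha)$ and $\bigl(f\big|_{[2,3]}\bigr)^{-1}(2\cos\beta)$ respectively. Because $\bigl(f\big|_{[2,3]}\bigr)^{-1}$ is strictly increasing and $\cos$ is strictly decreasing on $[0,\pi]$, the composite $\alpha\mapsto\lambda(\alpha)$ is strictly decreasing on $[0,\pi]$.

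Putting these pieces together, $\lambda\leq\gamma$ is equivalent to $\cos\alpha\leq\cos\beta$, and on $[0,\pi]$ this is in turn equivalent to $\alpha\geq\beta$; the same chain of equivalences yields $\lambda=\gamma$ if and only if $\alpha=\beta$, giving both halves of the lemma. The only real obstacle is the bookkeeping in the middle paragraph, namely verifying that the cubic genuinely has three real roots for every $c\in[-2,2]$ and that the largest one sits in $[2,3]$ rather than being, say, the intermediate root in $[1,2]$; but this follows directly from the derivative and endpoint values of $f$ computed in the first step, since the middle root lies in $[0,2]$ and only coincides with the largest root in the degenerate case $c=-2$, i.e.\ $\alpha=\pi$.
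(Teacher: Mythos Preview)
Your proof is correct and rests on the same idea as the paper's: since $\lambda$ is a root of $p_\alpha$, one has $\lambda^{3}-3\lambda^{2}=2\cos\alpha-2$, so comparing $\lambda$ and $\gamma$ amounts to comparing $\cos\alpha$ and $\cos\beta$ via the monotonicity of $x\mapsto x^{3}-3x^{2}$ on the relevant interval. The paper carries this out in two lines, passing directly from $\lambda-3\le\gamma-3$ to $\lambda^{2}(\lambda-3)\le\gamma^{2}(\gamma-3)$ and then to $\cos\alpha\le\cos\beta$.

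Your version is more careful in one respect that is worth pointing out: you first establish that the \emph{largest} root necessarily lies in $[2,3]$, where $f(x)=x^{3}-3x^{2}+2$ is strictly increasing, and then invoke the inverse of $f\big|_{[2,3]}$. The paper's hypothesis only assumes $1\le\lambda\le\gamma\le3$, and on $[1,3]$ the map $x\mapsto x^{2}(x-3)$ is \emph{not} monotone (e.g.\ $1^{2}(1-3)=-2>-4=2^{2}(2-3)$), so the implication $\lambda\le\gamma\Rightarrow\lambda^{2}(\lambda-3)\le\gamma^{2}(\gamma-3)$ is not immediate from the stated hypothesis alone. Your observation that the largest root is automatically at least $2$ is exactly what closes this gap. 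So the two arguments are the same in spirit, but yours supplies the missing justification for the key monotonicity step.
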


\begin{proof} 
By assumption, $$\lambda-3\leq \gamma-3$$ which gives $$\lambda^2(\lambda-3)\leq \gamma^2(\gamma-3).$$ Combining this with the polynomials we get $$2-2\cos(\alpha)\leq 2-2\cos(\beta),$$ so $\cos(\alpha)\leq \cos(\beta)$ and $\alpha\geq\beta$.
\end{proof}

\begin{lem} \label{oneomega}
Suppose $F$ is a $3_c$-uniform frame with corresponding Seidel matrix $Q$. Then
the entries $q_{ij}$ of $Q$ are of the form $\omega$ or $\bar{\omega}$ when $1<i$, $1<j$ and $i\ne j$, for some fixed complex number $\omega$ with modulus $1$.
\end{lem}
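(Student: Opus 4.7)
The plan is to extract information about the entries $q_{ij}$ with $i,j>1$ by computing the characteristic polynomial of suitably chosen $3\times 3$ compressions of $Q$ and applying Lemma~\ref{3charpol}. Because switching equivalence conjugates every compression by a unitary diagonal (and permutes indices), it preserves the spectrum of every $3\times 3$ compression and therefore the $3_c$-uniform property; so I would first switch $Q$ into standard form so that $q_{1\ell}=1$ for every $\ell>1$.

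Next, fixing a pair $i,j>1$ with $i\ne j$, I would consider the compression of $Q$ to rows and columns $\{1,i,j\}$:
\[
Q_3 \;=\; \begin{bmatrix} 0 & 1 & 1 \\ 1 & 0 & q_{ij} \\ 1 & \bar q_{ij} & 0 \end{bmatrix}.
\]
A direct cofactor expansion, using $|q_{ij}|=1$, yields the characteristic polynomial $\lambda^3 - 3\lambda - 2\,\mathrm{Re}(q_{ij})$. Writing $\mathrm{Re}(q_{ij}) = \cos\alpha$ for the unique $\alpha\in[0,\pi]$ and substituting $\lambda = x-1$ turns this expression into $x^3 - 3x^2 + 2 - 2\cos\alpha$, which is exactly the polynomial studied in Lemma~\ref{3charpol}.

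Now I would invoke the $3_c$-uniform hypothesis. By the argument of Corollary~\ref{upperbound}, $\|V^*DV\|$ equals the largest eigenvalue of $\tfrac{k}{n}I_3 + c_{n,k}Q_3$, so constancy of $e_3^\infty(F)$ forces the largest eigenvalue of $Q_3$ to be the same across all such compressions. Lemma~\ref{3charpol} then forces $\alpha$, and hence the real number $r := \mathrm{Re}(q_{ij})$, to be common to every pair $i,j>1$ with $i\ne j$. The modulus constraint $|q_{ij}|=1$ leaves only the two possibilities $q_{ij} = r \pm i\sqrt{1-r^2}$, so setting $\omega = r + i\sqrt{1-r^2}$ gives $q_{ij}\in\{\omega,\bar\omega\}$ as claimed.

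The main obstacle lies in the setup rather than in the computation: one has to notice that the useful compressions are the ones containing row and column $1$, so that in standard form the only free entry of $Q_3$ is the single $q_{ij}$, and one has to spot the shift $\lambda\mapsto x-1$ that aligns the characteristic polynomial with the polynomial of Lemma~\ref{3charpol}. Once those two observations are in place, the monotonicity part of Lemma~\ref{3charpol} together with the modulus-$1$ constraint closes the argument almost immediately.
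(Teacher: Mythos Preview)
Your proof is correct and follows essentially the same route as the paper's: both reduce the $3\times 3$ compression to the form $\left[\begin{smallmatrix}0&1&1\\1&0&\alpha\\1&\bar\alpha&0\end{smallmatrix}\right]$, compute its characteristic polynomial (you shift by $\lambda=x-1$, the paper instead works directly with $Q_3+I_3$; these are the same thing), and then invoke Lemma~\ref{3charpol} to force a common $\alpha$. The only cosmetic difference is that you put the whole matrix $Q$ into standard form first and then take compressions through row/column~$1$, whereas the paper switches each compression individually; your version is arguably cleaner since it ties the conclusion directly to the actual entries $q_{ij}$ of $Q$ referenced in the lemma statement.
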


\begin{proof}
Let $M$ and $N$ be two $3\times 3$ compressions of the Seidel matrix $Q$ corresponding to the $3$-uniform frame $F$. Since conjugating by an invertible matrix preserves eigenvalues, we can change $M$ and $N$ to be written as
$$M=\begin{bmatrix}
0 & 1 & 1 \\
1 & 0 & \alpha \\
1 & \bar{\alpha} &0
\end{bmatrix}$$ 
and
$$N=\begin{bmatrix}
0 & 1 & 1 \\
1 & 0 & \beta \\
1 & \bar{\beta} &0
\end{bmatrix}$$
where $\alpha$ and $\beta$ are complex numbers with modulus $1$. The characteristic polynomials of $M+I_3$ and $N+I_3$ are $x^3-3x^2+2-2\cos(\alpha)$ and  $x^3-3x^2+2-2\cos(\beta)$. Polynomials of this form are discussed in Proposition \ref{3charpol}. Since the norms of all $3\times 3$ compressions are equal, $\alpha$ and $\beta$ must be equal or conjugates.

The reverse direction is clear.
\end{proof}

\begin{proof} 
[of Thm. \ref{complex3unif}] In \cite{BP}, they observe that the 
trivial real $2_c$-uniform frames are $3_c$-uniform.

Without loss of generality assume that the Seidel matrix $Q$
associated with $F$ is in standard form. In the complex case, if $F$ 
is $3_c$-uniform, then Lemma \ref{oneomega} forces the off diagonal entries
of the $(n-1)\times (n-1)$ compression formed by removing the first row and column of 
$Q$ to be either of the form $\omega$ or $\bar{\omega}$ where $| \omega |=1$.

Suppose that  $i,j>1$, $i \not= j$, and the  $(i,j)$-entry of 
$Q$ is $\omega$. Using the fact that $Q^2=(n-1)I+\mu Q$ it follows that
$$
\mu \omega = m_1+m_2\omega^2+m_3\bar{\omega}^2
$$
where $m_1,m_2,$ and $m_3$ are positive integers. If $m_1=m_2$, then
$m_12\mathrm{Re}(\omega) +\mu=m_3\bar{\omega}^3$ which forces $\mu$ to be complex.
If $m_1 > m_2$, then
\begin{align} \label{dagger}
m_22\mathrm{Re}(\omega) + \mu &= (m_1-m_2)\bar{\omega}+m_3\bar{\omega}^3.
\end{align}
Clearly if $m_1-m_2 \not= m_3$, the right-hand side of (\ref{dagger}) is complex. 
On the other hand if $m_1-m_2=m_3$ and $\omega=\mathrm{e}^{i\theta}$ for 
some $0 \leq \theta < 2\pi$, then $\mathrm{e}^{i\theta}+\mathrm{e}^{i3\theta}$ must
be a real number. But this means that $\sin (\theta)+ \sin (3\theta)=0$ which occurs if and only if $\omega$ is a fourth root
of unity as desired.
\end{proof}

\begin{thm}\label{fouruniform}
The only nontrivial $4_c$-uniform frames are the ones in the equivalence class given by the $4\times 4$ Seidel matrix in Example \ref{3unif} previously mentioned.
\end{thm}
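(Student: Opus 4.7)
The plan is to combine Theorem~\ref{complex3unif} with a case analysis of the $4\times 4$ principal compressions of $Q$. Since any non-trivial $4_c$-uniform frame is in particular $3_c$-uniform, Theorem~\ref{complex3unif} guarantees that, after switching, the Seidel matrix of $F$ has the standard form
$$
Q = \begin{bmatrix} 0 & \mathbf{1}^T \\ \mathbf{1} & iA_0 \end{bmatrix}
$$
for some real skew-symmetric $(n-1)\times(n-1)$ matrix $A_0$ with $\pm 1$ off-diagonal entries. I would split the $4\times 4$ compressions into Type~A, those not involving index $1$, of the form $iB$ for a $4\times 4$ principal submatrix $B$ of $A_0$, and Type~B, those involving index $1$, of the bordered form
$$
M = \begin{bmatrix} 0 & \mathbf{1}^T \\ \mathbf{1} & iA' \end{bmatrix}
$$
for a $3\times 3$ principal submatrix $A'$ of $A_0$.

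Next I would show that the norm of each compression lies in $\{\sqrt{3},\,1+\sqrt{2}\}$. For Type~A, the eigenvalues of $B$ come in pairs $\pm i\mu_1,\pm i\mu_2$ with $\mu_1^2+\mu_2^2=\tfrac{1}{2}\mathrm{tr}(B^{T}B)=6$ and $\mu_1\mu_2=|\mathrm{Pf}(B)|$; since $\mathrm{Pf}(B)=b_{12}b_{34}-b_{13}b_{24}+b_{14}b_{23}$ is a sum of three $\pm 1$ terms, $|\mathrm{Pf}(B)|\in\{1,3\}$, which forces $\|iB\|=\sqrt{3}$ when $|\mathrm{Pf}(B)|=3$ and $\|iB\|=1+\sqrt{2}$ otherwise. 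For Type~B, I would decompose $\mathbf{1}$ along the eigenbasis of $iA'$, whose spectrum is $\{0,\pm\sqrt{3}\}$: when $A'$ represents a $3$-cycle tournament, equivalently $a'_{12}=-a'_{13}=a'_{23}$, the vector $\mathbf{1}$ is parallel to the kernel of $iA'$ and $M$ has eigenvalues $\pm\sqrt{3}$ each of multiplicity two, giving $\|M\|=\sqrt{3}$; otherwise $A'$ is transitive and the power sums $\mathrm{tr}(M^k)$ for $k=1,2,3$ together with $\det(M)=1$ yield the characteristic polynomial $\lambda^4-6\lambda^2+1$, so $\|M\|=1+\sqrt{2}$. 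I expect this Type~B computation to be the main obstacle.

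With the norms classified, the $4_c$-uniform condition forces a common value $\sigma\in\{\sqrt{3},\,1+\sqrt{2}\}$ for all compressions. If $\sigma=\sqrt{3}$, every $3$-subset of $A_0$ must be a $3$-cycle, but applying this simultaneously to any four indices $i<j<k<l$ of $\{1,\ldots,n-1\}$ yields the contradiction $a_{ij}=-a_{ij}$ whenever $n-1\geq 4$. If $\sigma=1+\sqrt{2}$, the tournament encoded by $A_0$ is transitive, so after relabelling I may assume $a_{ij}=1$ for $i<j$; then $(Q^2)_{1,2}=-(n-2)i$, which by Theorem~\ref{rod} must equal the real number $\mu$, impossible for $n\geq 3$. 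Hence no non-trivial $4_c$-uniform frame exists for $n\geq 5$. For $n=4$ the sole $4\times 4$ compression is $Q$ itself, so $4$-uniformity is automatic, and the requirement that $Q$ have two distinct eigenvalues forces $A_0$ to be a $3$-cycle, yielding (up to switching equivalence) the Seidel matrix of Example~\ref{3unif}.
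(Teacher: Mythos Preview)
Your argument is correct, and it takes a genuinely different route from the paper's proof. The paper first invokes the Ramsey bound $r(3,3)=6$: for $n\ge 7$ the $2$-colouring of $K_{n-1}$ induced by the signs of the $\pm i$ entries must contain a monochromatic triangle, which together with a row-sum identity extracted from the proof of Theorem~\ref{complex3unif} produces two explicit $4\times 4$ compressions (your ``transitive'' and ``cyclic'' Type~B matrices) with different norms. The remaining cases $n\le 6$ are then disposed of by computer. Your approach instead computes the spectrum of \emph{every} $4\times 4$ compression via the Pfaffian (Type~A) and the characteristic polynomial $\lambda^4-6\lambda^2+1$ (Type~B transitive), and then rules out each of the two possible common values $\sigma\in\{\sqrt{3},\,1+\sqrt{2}\}$ by elementary tournament combinatorics combined with the reality of $\mu$ in $Q^2=(n-1)I+\mu Q$. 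This handles all $n\ge 5$ uniformly, with no appeal to Ramsey theory and no computational check; it is longer but entirely self-contained.

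Two small remarks. First, strictly speaking $4$-uniformity constrains the \emph{largest} eigenvalue of $Q_4$ rather than $\|Q_4\|$; in your situation the spectra of all compressions are symmetric about~$0$, so the two coincide, but it is worth saying so explicitly. Second, your Type~A analysis is in fact unnecessary: since Type~B compressions exist for every $n\ge 4$ and already take only the values $\sqrt{3}$ and $1+\sqrt{2}$, the dichotomy on $\sigma$ and both ensuing contradictions use Type~B alone. Keeping Type~A does no harm, but dropping it would shorten the write-up.
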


\begin{proof}
By Theorem \ref{complex3unif} we know that a nontrivial $3_c$-uniform frame corresponds to Seidel matrix $Q$ with entries
$$Q_{jm}=
\begin{cases}
0 & \text{if $m=j$,} \\
1 & \text{if $m\neq j$ and $m=1$ or $j=1$,} \\
\pm i & \text{otherwise}.
\end{cases}
$$ The proof of Theorem \ref{complex3unif} shows that the sum of the entries in each row and column of $q$, other than the first, is $1$.

Suppose $Q$ is an $n\times n$ matrix, then we use $Q$ to describe an edge coloring of the complete graph $K_{n-1}$. Label the vertices by the integers $2,\dots,n$. Color the edge from vertex $j$ to vertex $m$ red if $q_{jm}=-i$ and blue otherwise. It is well known, see \cite{CH,GRS}, that the Ramsey number $\operatorname{r}(3,3)=6$. With our interpretation of $Q$ giving a coloring, when $n\geq 7$ our coloring of $K_{n-1}$ contains a monochromatic triangle. The labels of the vertices of this triangle along with $1$ give us a $4\times 4$ compression of $Q$ of the form
$$
\begin{bmatrix}
0 & 1 & 1 & 1 \\
1 & 0 & i & i \\
1 & -i & 0 & i \\
1 & -i & -i & 0 \\
\end{bmatrix}
\text{ or }
\begin{bmatrix}
0 & 1 & 1 & 1 \\
1 & 0 & -i & -i \\
1 & i & 0 & -i \\
1 & i & i & 0 \\
\end{bmatrix}.
$$

Without loss of generality, assume that the first of the possible compressions above is the top left corner of $Q$. Since the row sums of $Q$ are $1$, there is a column of $Q$ such that $q_{j2}=-i$ and $q_{j3}=i$. With this, the $4\times 4$ compression using the rows and columns $\{1,2,3,j\}$ has the form
$$
\begin{bmatrix}
0 & 1 & 1 & 1 \\
1 & 0 & i & -i \\
1 & -i & 0 & i \\
1 & i & -i & 0 \\
\end{bmatrix}.
$$
These two compressions have different norms, so $Q$ is not $4$-uniform. A similar argument works for the other possible compression above.

The cases where $n<7$ have been checked computationally.
\end{proof}

In \cite{BP}, the authors showed that the only real $3_c$-uniform frames are the trivial $(n,n-1)$ and $(n,1)$ frames. Theorem \ref{complex3unif} extends this classification of $3_c$-uniform frames to the complex case. In addition to the real $3_c$-uniform frames, we add a new class, in particular the frames derived from real skew-symmetric matrices with exactly two eigenvalues. Theorem \ref{fouruniform} takes this classification one step farther to show that the only real or complex $4_c$-uniform frames are the trivial frames and one more, Example \ref{3unif}, which is $4_c$-uniform for the trivial reason that it has only one $4\times 4$ compression.

We end the paper by interpreting these results geometrically. A 
uniform $(n,k)$-frame yields a set of $n$-vectors in $\mathbb{R}^k$ (or 
$\mathbb{C}^k$) which have equal lengths. Another way to interpret $2$-uniform 
$(n,k)$-frames (or equivalently ETFs) is that the area of the parallelogram formed by
any two distinct vectors from such a frame is a constant. Intuitively, it would seem that
the  volume of the parallelepiped formed by choosing any three distinct vectors from a $2$-uniform
$(n,k)$-frame should be a constant. However, this is not true in general. In the real case, this is true if and only if the 
frame is trivial \cite{BP}, i.e., either an $(n,1)$ or $(n,n-1)$ frame. Similarly,
we have proven that in the complex case, the volume of the parallelepiped formed by choosing any three distinct vectors from a $2$-uniform
$(n,k)$-frame is a constant if and only if the Seidel matrix associated with
the frame comes from a real skew-symmetric with exactly two eigenvalues.

\end{document}